\newtheorem{theorem}{Theorem}
\newtheorem{proposition}[theorem]{Proposition}
\theoremstyle{definition}
\newtheorem{example}{Example}
\newtheorem{definition}{Definition}
\date{}
\title{\Large \textbf{Tribracket Polynomials}}
\author{
Sam Nelson\footnote{Email: Sam.Nelson@cmc.edu. Partially supported by Simons Foundation collaboration grant 702597.}
\and Fletcher Nickerson\footnote{Email: nnickerson@hmc.edu. Supported by the HMC Math Department Giovanni Borrelli Fund.}}
\begin{document}
\maketitle

\begin{abstract} We introduce a six-variable polynomial invariant of 
Niebrzydowski tribrackets analogous to quandle, rack and biquandle
polynomials. Using the subtribrackets of a tribracket, we additionally define subtribracket polynomials and establish a sufficient condition 
for isomorphic subtribrackets to have the same polynomial regardless 
of their embedding in the ambient tribracket. As an application, we
enhance the tribracket counting invariant of knots and links using 
subtribracket polynomials and provide examples to demonstrate that this 
enhancement is proper.
\end{abstract}

\parbox{5.5in} {\textsc{Keywords:} Niebrzydowski tribrackets, 
tribracket polynomials, enhancements of counting invariants
                
\smallskip
                
\textsc{2020 MSC:} 57K12}

\section{Introduction}

In \cite{N}, a two-variable polynomial invariant of finite quandles known
as the \textit{quandle polynomial} was introduced. It was generalized
to the case of biquandles in \cite{N2} and racks in \cite{CN}. 
As an application, subquandle polynomials and their analogues were 
applied to define enhancements of the various counting invariants of knots 
and links associated to quandles, biquandles and racks.

Sets with ternary operations known as \textit{knot-theoretic
ternary quasigroups} were introduced in \cite{Nie1} and used to define 
knot and link invariants. Subsequent work in \cite{Nie2} presented a homology 
theory for these structures, which was used to enhance their associated 
counting invariants.

In several recent papers, the first listed author and collaborators have 
studied and generalized the knot-theoretic ternary quasigroup structure, 
also called \textit{Niebrzydowski tribrackets},
and defined new enhanced invariants of knots and related structures. In
\cite{NNS}, tribracket counting invariants were enhanced with module
structures analogous to rack and quandle modules; in \cite{NP}, 
\textit{virtual tribrackets} extended tribracket coloring invariants to
virtual knots and links, while in \cite{GNT}, \textit{Niebrzydowski algebras}
extended tribracket colorings to trivalent spatial graphs and handlebody-links.
More recently, in \cite{NP2}, a general theory of multi-tribrackets extended
tribracket colorings to various types of generalized knot theories, and in 
\cite{ANR} tribracket-brackets analogous to biquandle brackets were defined.

In this paper we introduce a notion of tribracket polynomial analogous to the 
quandle polynomial from \cite{N}. This polynomial can be conceptualized as 
quantifying the way in which trivial actions of pairs of elements on other 
elements are distributed throughout the algebraic structure, in contrast 
with the trivial action being concentrated in the identity element of a group. 
As an application, we define subtribracket
polynomial enhancements of the tribracket counting invariant. The paper is
organized as follows. In Section \ref{NT} we review the basics of tribracket
theory. In Section \ref{P} we define tribracket polynomials and compute
some examples. In Section \ref{SPE} we apply the subtribracket polynomial
construction to enhance the tribracket counting invariant for oriented classical
knots and links. We conclude in Section \ref{Q} with some questions and 
directions for future work.

\section{Niebrzydowski Tribrackets}\label{NT}

\begin{definition}
Let $X$ be a set. A (horizontal) \textit{Niebrzydowski tribracket structure} 
on $X$, also called a \textit{knot-theoretic ternary quasigroup structure} 
on $X$, is a map $[\ ,\ ,\ ]:X\times X\times X\to X$ satisfying the conditions
\begin{itemize}
\item For every $a,b,c\in X$ there are unique $x,y,z\in X$ such that
\[[a,b,x]=c, \quad [a,y,b]=c\quad \mathrm{and} \quad [z,a,b]=c\]
and
\item For every $a,b,c,d\in X$ we have
\[[c,[a,b,c],[a,c,d]]=[b,[a,b,c],[a,b,d]]=[d,[a,b,d],[a,c,d]].\]
\end{itemize}
\end{definition}

\begin{example}
Let $G$ be any group. Then the operation
\[[a,b,c]=ba^{-1}c\]
defines a tribracket structure known as the \textit{Dehn tribracket}
of $G$.
\end{example}

\begin{example}
Let $X$ be any module over the two-variable Laurent polynomial ring
$\mathbb{Z}[t^{\pm 1}, s^{\pm 1}]$. Then the operation
\[[a,b,c]=tb+sc-tsa\]
defines a tribracket operation known as an \textit{Alexander tribracket}.
\end{example}

\begin{example}
Let $X=\{1,2,\dots, n\}$. We can specify a tribracket structure on $X$
with an \textit{operation 3-tensor}, i.e. an ordered $n$-tuple of $n\times n$
matrices where the element in matrix $i$ row $j$ column $k$ is $[i,j,k]$.
For instance, the reader can verify that the operation 3-tensor
\[
\left[
\left[\begin{array}{rrr} 2 & 3 & 1 \\ 1 & 2 & 3 \\ 3 & 1 & 2 \end{array}\right],
\left[\begin{array}{rrr} 3 & 1 & 2 \\ 2 & 3 & 1 \\ 1 & 2 & 3 \end{array}\right],
\left[\begin{array}{rrr} 1 & 2 & 3 \\ 3 & 1 & 2 \\ 2 & 3 & 1 \end{array}\right]
\right]
\]
satisfies the tribracket axioms and hence defines a Niebrzydowski tribracket
structure on the set $X=\{1,2,3\}$.
\end{example}

\begin{definition}
A \textit{subtribracket} is a subset $S\subset X$ of a tribracket and is itself a tribracket
under the restriction of $[\ ,\ ,\ ]$ to $S$.
\end{definition}

We note that closure of a subset $S$ of a finite tribracket $X$ under the 
tribracket operation is both necessary and sufficient for $S$ to be a 
subtribracket.

\begin{example}
In the tribracket structure on $X=\{1,2,3\}$ specified by the 3-tensor
\[\left[\left[\begin{array}{rrr}
1 & 3 & 2 \\
2 & 1 & 3 \\
3 & 2 & 1
\end{array}\right], \left[\begin{array}{rrr}
2 & 1 & 3 \\
3 & 2 & 1 \\
1 & 3 & 2
\end{array}\right], \left[\begin{array}{rrr}
3 & 2 & 1 \\
1 & 3 & 2 \\
2 & 1 & 3
\end{array}\right]\right],\]
the singleton sets $\{1\}, \{2\}$ and $\{3\}$ each form
proper subtribrackets, while in the tribracket structure specified by
\[\left[\left[\begin{array}{rrr}
1 & 3 & 2 \\
3 & 2 & 1\\
2 & 1 & 3 \\
\end{array}\right], \left[\begin{array}{rrr}
3 & 2 & 1 \\
2 & 1 & 3 \\
1 & 3 & 2
\end{array}\right], \left[\begin{array}{rrr}
2 & 1 & 3 \\
3 & 2 & 1 \\
1 & 3 & 2 \\
\end{array}\right]\right]\]
the only proper subtribracket is $\{1\}$.
\end{example}

\begin{definition}
A map $f:X\to Y$ between tribrackets is a \textit{tribracket homomorphism} 
if for all $a,b,c\in X$ we have 
\[[f(a),f(b),f(c)]=f([a,b,c]).\]
\end{definition}

\begin{definition}
Let $f:X\to Y$ be a tribracket homomorphism. The subtribracket of $Y$ generated
by the elements $f(x)$ for all $x\in X$ is called the \textit{image} of $f$,
denoted $\mathrm{Im}(f)$.
\end{definition}

The tribracket axioms are motivated by the \textit{Reidemeister moves}
of knot theory. The idea is to assign an element of $X$ to each region in the
planar complement of an oriented knot or link diagram $D$ with the colors 
related as depicted:
\[\includegraphics{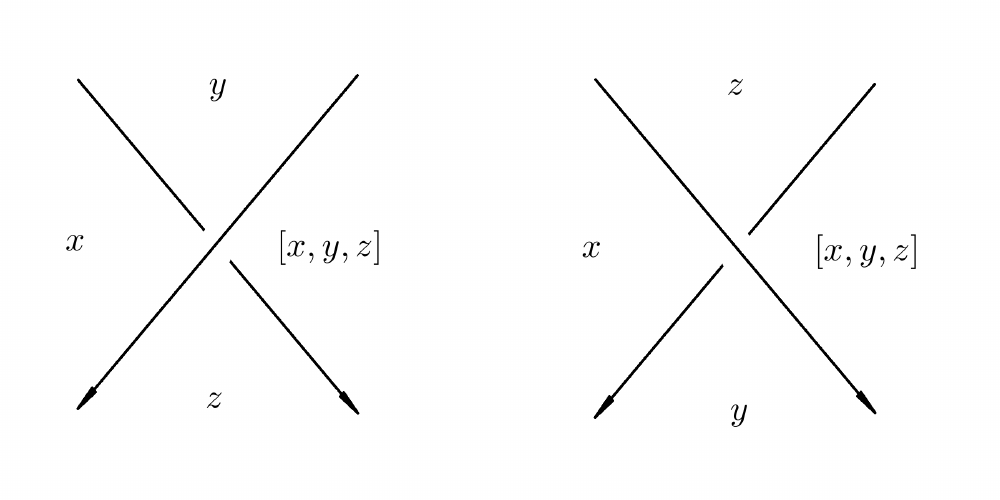}\]
Such an assignment is called an \textit{$X$-coloring} of $D$.

Then the Niebrzydowski tribracket axioms are the conditions required
to ensure that for each $X$-coloring on one side of a move, there is a
unique corresponding $X$-coloring on the other side of the move which is
unchanged outside the move's neighborhood. In particular, we have:

\begin{theorem}
Let $X$ be a finite Niebrzydowski tribracket and a $L$ be an oriented link
with diagram $D$. The cardinality of the set $\mathcal{C}(D,X)$ of 
$X$-colorings of $D$ is an integer-valued invariant of $L$, known as the
\textit{tribracket counting invariant}, denoted $\Phi_X^{\mathbb{Z}}(L)$.
\end{theorem}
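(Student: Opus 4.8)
The plan is to show that the cardinality of the coloring set is invariant under each of the three oriented Reidemeister moves, since two diagrams represent the same oriented link if and only if they are related by a finite sequence of such moves (together with planar isotopy, under which colorings are manifestly unchanged). The key observation, already supplied in the excerpt, is that the tribracket axioms were precisely designed so that for each $X$-coloring of the diagram on one side of a move there is a unique $X$-coloring on the other side agreeing with it outside the neighborhood of the move. I would exploit this directly: the correspondence it provides is a bijection between colorings near the move, and since colorings are unchanged outside the move, it extends to a bijection on the full sets $\mathcal{C}(D,X)$ and $\mathcal{C}(D',X)$ whenever $D$ and $D'$ differ by a single move.

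**First I would** set up the bijection framework carefully. Fix a move relating diagrams $D$ and $D'$, and let $N$ be the disk in which they differ. A coloring of $D$ restricts to a coloring of the regions outside $N$, which are shared between $D$ and $D'$; call this the \emph{boundary data} on the arcs and regions meeting $\partial N$. Define a map $\mathcal{C}(D,X)\to\mathcal{C}(D',X)$ by sending a coloring $f$ to the coloring that agrees with $f$ outside $N$ and uses the unique completion inside $N$ guaranteed by the relevant axiom. The content to verify is that (i) this completion always exists and is unique, and (ii) the analogous map in the reverse direction is its two-sided inverse. Existence and uniqueness of the completion is exactly what the two tribracket axioms encode: the unique-solvability condition (the first axiom) governs the region-filling for the Reidemeister I and II moves, where one must solve equations of the form $[a,b,x]=c$, $[a,y,b]=c$, or $[z,a,b]=c$ for a missing region color, while the compatibility condition (the second axiom) governs the Reidemeister III move, where the two ways of coloring the central region around the triple point must agree.

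**The main work** is the case analysis over the oriented Reidemeister moves. There are several oriented variants of each move (the standard Polyak generating set has four moves, but checking all oriented versions of RI, RII, and RIII is the thorough route), and for each one I would draw the before-and-after pictures with region colors labeled, read off the coloring constraints from the crossing relation depicted in the figure, and confirm that the missing interior colors are uniquely determined by the boundary data. For RI and RII this reduces to invoking the unique-existence axiom to solve for the new region color(s); for RIII it reduces to invoking the second axiom, whose three equal expressions are precisely the statement that the color of the central region is well defined independent of the order in which the three crossings are resolved. I expect the \textbf{main obstacle} to be the bookkeeping: correctly orienting each strand, correctly identifying which of the three solvability equations applies to each region in each oriented variant, and matching the labels in the before/after diagrams so that the boundary data genuinely coincides. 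Once each move yields a bijection, invariance of the cardinality follows, since any two diagrams of $L$ are connected by finitely many moves and each step preserves $|\mathcal{C}(D,X)|$; hence $\Phi_X^{\mathbb{Z}}(L):=|\mathcal{C}(D,X)|$ is well defined, completing the proof.
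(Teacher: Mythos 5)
Your proposal is correct and follows essentially the same route as the paper, which proves the theorem (implicitly, in the remarks preceding the statement) by observing that the tribracket axioms were designed exactly so that each $X$-coloring on one side of a Reidemeister move corresponds to a unique $X$-coloring on the other side, yielding a bijection of coloring sets and hence invariance of the cardinality. Your elaboration --- the first axiom handling the unique region-filling for moves I and II and the second axiom guaranteeing well-definedness of the central region color in move III --- is precisely the case analysis the paper leaves to the reader.
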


\begin{example}
Let us consider the Hopf link with tribracket $X=\{1,2,3\}$ specified by 
\[
\left[
\left[\begin{array}{rrr} 1 & 2 & 3 \\ 3 & 1 & 2 \\ 2 & 3 & 1 \end{array}\right],
\left[\begin{array}{rrr} 2 & 3 & 1 \\ 1 & 2 & 3 \\ 3 & 1 & 2 \end{array}\right],
\left[\begin{array}{rrr} 3 & 1 & 2 \\ 2 & 3 & 1 \\ 1 & 2 & 3 \end{array}\right]
\right].
\]
$X$-colorings are solutions $(x,y,z,w)$ such that $[x,y,z]=w=[x,z,y]$. 
\[\includegraphics{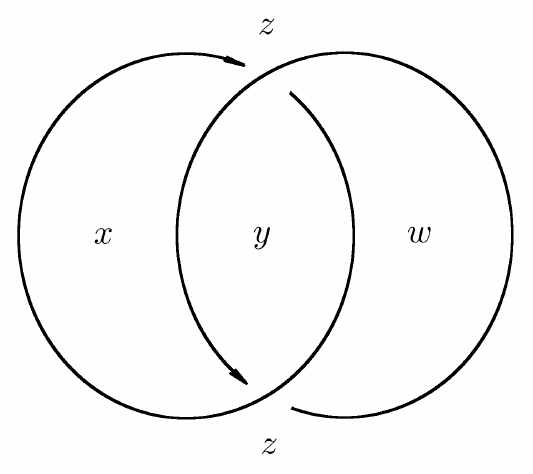}\]
The 
reader can verify that the Hopf link has nine such colorings. However, the unlink of two components has three non-interacting regions, which yield twenty-seven colorings. Hence, the Hopf link is distinguished from the 
unlink by the tribracket counting invariant for the specified tribracket $X$.
\end{example}

\section{Tribracket Polynomials}\label{P}

We can now state our main new definition.

\begin{definition}
Let $X$ be a finite Niebrzydowski tribracket. For each $i\in K$, let
\begin{itemize}
\item $l_1(i)$ be the number of elements $j\in X$ such that $[i,j,j]=i$,
\item $c_1(i)$ be the number of elements $j\in X$ such that $[j,i,j]=i$,
\item $r_1(i)$ be the number of elements $j\in X$ such that $[j,j,i]=i$,
\item $l_2(i)$ be the number of elements $j\in X$ such that $[i,j,j]=j$,
\item $c_2(i)$ be the number of elements $j\in X$ such that $[j,i,j]=j$, and
\item $r_2(i)$ be the number of elements $j\in X$ such that $[j,j,i]=j$.
\end{itemize}
Then the \textit{tribracket polynomial} of $X$ is the polynomial
\[\phi(X)=\sum_{i\in X} x^{l_1(i)}y^{c_1(i)}z^{r_1(i)}u^{l_2(i)}v^{c_2(i)}w^{r_2(i)}.\]
\end{definition}

\begin{example}
Let us compute the tribracket polynomial of the Niebrzydowski tribracket
structure on the set $X=\{1,2,3\}$ specified by the operation 3-tensor
\[
\left[
\left[\begin{array}{rrr} 
2 & 3 & 1 \\ 
1 & 2 & 3 \\ 
3 & 1 & 2 \end{array}\right],
\left[\begin{array}{rrr} 
3 & 1 & 2 \\ 
2 & 3 & 1 \\ 
1 & 2 & 3 \end{array}\right],
\left[\begin{array}{rrr} 
1 & 2 & 3 \\ 
3 & 1 & 2 \\ 
2 & 3 & 1 \end{array}\right]
\right].
\]
Starting with the element $1\in X$, we note that 
\begin{itemize}
\item the equation $[1,x,x]=1$ has no solutions, 
\item the equation $[x,1,x]=1$ has one solution, namely $x=2$, 
\item the equation $[x,x,1]=1$ has no solutions,
\item the equation $[1,x,x]=x$ has one solution, namely $x=2$,
\item the equation $[x,1,x]=x$ has one solution, namely $x=3$, and 
\item the equation $[x,x,1]=x$ has one solution, namely $x=2$.
\end{itemize}
Thus, the element $x=1$ contributes $x^0y^1z^0u^1v^1w^1=yuvw$ to the
polynomial $\phi$. Repeating for $2,3\in X$, we obtain tribracket polynomial
$\phi(X)=3yuvw$.
\end{example}

\begin{proposition}
If $f:X\to Y$ is a isomorphism of tribrackets, then $\phi(X)=\phi(Y)$.
\end{proposition}

\begin{proof}
%This proof takes the approach 
It suffices to show
that each of the exponent variables $l_1(i)$, $c_1(i)$, $r_1(i)$, $l_2(i)$, $c_2(i)$, $r_2(i)$ used to construct the tribracket polynomial are unchanged by isomorphism $f$, meaning that the tribracket polynomials for the isomorphic tribrackets will be the same.
Let's demonstrate that the first of these variables ($l_1(i)$) has the same value for tribrackets $X$ and $Y$. Without loss of generality, this approach can be applied to the remaining five variables. 
Recall that in $X$, $l_1(i)$ counts the number of elements $j\in X$ such that $[i,j,j]=i$. Applying the definition of a tribracket homomorphism (Definition 3), our tribracket isomorphism maps this expression to $f(j)\in y$ such that $[f(i),f(j),f(j)]=f(i)$. Every element in $X$ is replaced by its image in $Y$. We know there are as many elements $j\in X$ as elements $f(j)\in Y$ because $f$ is an isomorphism. As a result, $l_1(i)$ will have the same values for the tribracket polynomials for $X$ and $Y$. The same argument can be extended for the remaining tribracket polynomial exponent variables.
\end{proof}

\begin{example}
We computed the tribracket polynomials for all tribracket structures with up to
$n=5$ elements; the results are collected in the below table.
\[\begin{array}{r|l}
n & \phi(X) \\ \hline
& \\
1 & uvwxyz  \\ 
& \\
2 &
2uvwx^2y^2z^2,\ 
2uvw
\\  & \\
3 &
3uvwxy^3z^3,\ 
3uvwx^3yz^3,\ 
3uvwx^3y^3z,\ 
u^3v^3w^3xyz + 2xyz,\ 
3uvwz,\ 
3uvwy,\ 
3uvwx
\\ & \\
4 & 4uvw,\ 
4uvwx^4y^4z^2,\ 
4uvwx^2y^2z^2,\ 
4uvwxy,\ 
4uvwz^2,\ 
4uvwx^2y^4z^4,\ 
4uvwx^4y^2z^4,\\ &  
4uvwxyz^4,\ 
4uvwx^2y^4z^2,\ 
4uvwx^4y^2z^2,\ 
4uvwy^2z^2,\ 
4uvwx^2z^2,\ 
4uvwy^2,\ 
4uvwx^2,\\ & 
4uvwx^2y^2z^4,\ 
4uvwx^2y^2,\ 
4uvwx^4y^4z^4
\\ & \\
5 & 5uvwx^5y^5z,\ 
uvw^5xyz + 4uvxyz,\ 
u^5v^5wxyz + 4wxyz,\ 
5uvwz,\ 
5uvwy,\ 
5uvwxy,\\ &  
5uvwxyz^5,\
5uvwxyz,\ 
5uvwx,\ 
5uvwx^5yz^5,\ 
5uvwxy^5z^5
\end{array}\]
\end{example}

\begin{definition}
Let $S\subset X$ be a subtribracket. Then the \textit{subtribracket polynomial}
$\phi(S\subset X)$ of $S$ with respect to $X$ is the sum
\[\phi(S\subset X)=
\sum_{i\in S} x^{l_1(i)}y^{c_1(i)}z^{r_1(i)}u^{l_2(i)}v^{c_2(i)}w^{r_2(i)}\]
of the contributions of the elements of $S$ to the tribracket polynomial.
\end{definition}

\begin{example}\label{ex:tpolylist}
The subtribracket $S=\{1\}$ of the tribracket structure on $X=\{1,2,3\}$
specified by the 3-tensor
\[\left[\left[\begin{array}{rrr}
1 & 3 & 2 \\
3 & 2 & 1\\
2 & 1 & 3 \\
\end{array}\right], \left[\begin{array}{rrr}
3 & 2 & 1 \\
2 & 1 & 3 \\
1 & 3 & 2
\end{array}\right], \left[\begin{array}{rrr}
2 & 1 & 3 \\
3 & 2 & 1 \\
1 & 3 & 2 \\
\end{array}\right]\right]\]
has subtribracket polynomial $\phi(S\subset X)=xyzu^3v^3w^3$.
\end{example}

The subtribracket polynomial of a subtribracket is not in general determined 
by the isomorphism class of the subtribracket, but carries information 
about the way the subtribracket is embedded in the ambient tribracket. 
In this way, the subtribracket polynomial is already essentially knot-theoretic.
However, there is a class of tribrackets in which isomorphic subtribrackets 
all have the same subtribracket polynomial:

\begin{definition}
A tribracket is \textit{homogeneous} if every element contributes the
same monomial to the tribracket polynomial.
\end{definition}

\begin{proposition}
Any two subtribrackets of a homogeneous tribracket with the same cardinality
have the same subtribracket polynomial.
\end{proposition}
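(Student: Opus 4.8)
The plan is to unwind the definitions and observe that the homogeneity hypothesis collapses the subtribracket polynomial into a scalar multiple of a single fixed monomial, whence it depends only on the cardinality of the subtribracket. The essential point to keep in mind throughout is that in $\phi(S\subset X)$ the exponents $l_1(i), c_1(i), r_1(i), l_2(i), c_2(i), r_2(i)$ are computed with respect to the \emph{ambient} tribracket $X$ --- they count elements $j\in X$, not $j\in S$ --- since $\phi(S\subset X)$ is by definition the sum of the contributions of the elements of $S$ to $\phi(X)$.

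First I would invoke the definition of homogeneity: there is a single monomial $M = x^{l_1}y^{c_1}z^{r_1}u^{l_2}v^{c_2}w^{r_2}$ such that each element $i\in X$ contributes exactly $M$ to $\phi(X)$; equivalently, all six exponent functions are constant on $X$. Substituting this into the definition of the subtribracket polynomial, for any subtribracket $S\subset X$ I obtain $\phi(S\subset X) = \sum_{i\in S} M = |S|\,M$, so that the subtribracket polynomial is determined entirely by $|S|$ together with the fixed monomial $M$ of $X$. Consequently, if $S, T\subset X$ are subtribrackets with $|S| = |T|$, then $\phi(S\subset X) = |S|\,M = |T|\,M = \phi(T\subset X)$, which is the desired claim.

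The main obstacle here is not computational but definitional: the entire content of the statement lies in reading the definition of the subtribracket polynomial correctly, and in particular in recognizing that its exponents are evaluated in the ambient tribracket rather than internally in $S$. Were they instead computed within $S$, homogeneity of $X$ would not suffice, since a subtribracket of a homogeneous tribracket need not itself be homogeneous, and the conclusion could then fail. I would therefore take care to state the ambient-versus-internal distinction explicitly at the outset so that the one-line computation $\phi(S\subset X)=|S|\,M$ is fully justified.
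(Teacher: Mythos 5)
Your proof is correct and follows essentially the same approach as the paper's: homogeneity makes every element contribute the same monomial $M$, so $\phi(S\subset X)=|S|\,M$ depends only on cardinality. Your explicit attention to the fact that the exponents are computed in the ambient tribracket $X$ rather than internally in $S$ is a welcome clarification that the paper's one-line proof leaves implicit, but it does not change the argument.
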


\begin{proof}
Since every element has the same contribution, the polynomial is determined
by the number of contributions, i.e. by the the number of elements in the 
subtribracket.
\end{proof}

In particular we note that in Example \ref{ex:tpolylist}, all tribrackets 
are homogeneous for $n=1,2,4$, whereas for $n=3,5$ there are non-homogeneous 
tribrackets.

\section{Subtribracket Polynomial Enhancement}\label{SPE}

As an application, we will now define a new link invariant analogous to the
subquandle polynomial invariant  defined in \cite{N}. 

\begin{definition}
Let $X$ be a finite tribracket and $L$ an oriented classical link. The 
\textit{subtribracket polynomial enhancement} of the tribracket counting
invariant $\Phi_X(L)$ of $L$ with respect to $X$ is the multiset 
\[\Phi_{\mathrm{Im}\subset X}(L)=
\{\phi (\mathrm{Im}(f)\subset X)\ |\ f\in \mathcal{C}(L,X)\}\]
of subtribracket polynomials of the image subtribrackets over the set of
tribracket colorings of $L$ by $X$.
\end{definition}

\noindent By construction, we have our main result:

\begin{proposition}
Let $X$ be a finite tribracket. If two links $L$ and $L'$ are ambient
isotopic, then $\Phi_{\mathrm{Im}\subset X}(L)=\Phi_{\mathrm{Im}\subset X}(L')$.
\end{proposition}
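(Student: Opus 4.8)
The plan is to reduce the claim to the already-established invariance of the underlying counting invariant, combined with a bookkeeping argument showing that the image subtribracket is preserved under the canonical bijection of colorings induced by each Reidemeister move. By Reidemeister's theorem, diagrams $D$ and $D'$ of ambient isotopic links $L$ and $L'$ are related by a finite sequence of planar isotopies and Reidemeister moves, so by induction it suffices to treat the case in which $D$ and $D'$ differ by a single move. Planar isotopy is immediate, as it carries regions to regions and leaves every assigned color, and hence the entire image, unchanged.

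First I would invoke the counting-invariant theorem: the tribracket axioms guarantee that for each $X$-coloring $f$ of $D$ there is a \emph{unique} $X$-coloring $f'$ of $D'$ agreeing with $f$ on every region outside the neighborhood of the move, and that the assignment $f \mapsto f'$ is a bijection $\mathcal{C}(D,X) \to \mathcal{C}(D',X)$. The heart of the argument is then to show that this bijection preserves the image, i.e. that $\mathrm{Im}(f)=\mathrm{Im}(f')$ for every coloring $f$, where $\mathrm{Im}(f)$ denotes the subtribracket of $X$ generated by the colors appearing in the coloring.

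The key step, which I expect to be the main obstacle, is this equality of image subtribrackets. I would argue as follows. Every color appearing in $D'$ inside the neighborhood of the move is determined by the tribracket coloring rule from the colors on the boundary of that neighborhood, and those boundary colors already appear in $D$; since a subtribracket is closed under the operation, each such color lies in the subtribracket generated by the colors of $D$, giving $\mathrm{Im}(f') \subseteq \mathrm{Im}(f)$. Applying the same reasoning to the inverse move, which recovers $f$ from $f'$ and regenerates any color that the forward move may have removed, yields the reverse inclusion, so $\mathrm{Im}(f)=\mathrm{Im}(f')$. The only care needed here is to check, move by move (R1, R2, R3), that the newly created region colors really are expressible via the tribracket operation from colors already present, which is exactly what the coloring rule at each crossing provides.

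Finally, since $\mathrm{Im}(f)=\mathrm{Im}(f')$ as subtribrackets of the same ambient tribracket $X$, their associated subtribracket polynomials coincide: $\phi(\mathrm{Im}(f) \subset X)=\phi(\mathrm{Im}(f') \subset X)$. The bijection $f \mapsto f'$ therefore matches the two multisets term by term, giving $\Phi_{\mathrm{Im}\subset X}(L)=\Phi_{\mathrm{Im}\subset X}(L')$ and completing the induction.
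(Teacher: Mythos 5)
Your proof is correct, and it is in fact considerably more than the paper provides: the paper offers no proof at all, simply asserting the proposition ``by construction,'' i.e.\ leaning on the earlier theorem that the coloring set $\mathcal{C}(D,X)$ is in bijective correspondence under Reidemeister moves. Your write-up supplies exactly the content that one-line justification suppresses, namely that the move-induced bijection $f\mapsto f'$ preserves the image subtribracket, so the multiset of polynomials $\phi(\mathrm{Im}(f)\subset X)$ is matched term by term --- this is the real (if routine) content of the proposition, and it is good that you identified it as the heart of the matter rather than waving at the counting invariant. One small gloss worth tightening: the new region colors created inside a move neighborhood are not always obtained by a direct application of $[\ ,\ ,\ ]$ to old colors; for R1 and R2 they are typically the unique \emph{solutions} of equations such as $[a,b,x]=c$, i.e.\ they arise from the inverse (quasigroup) operations, so ``closed under the operation'' does not immediately give $\mathrm{Im}(f')\subseteq\mathrm{Im}(f)$. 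The fix is immediate but should be said: a subtribracket $S$ of a finite tribracket is itself a tribracket, so the equation $[a,b,x]=c$ with $a,b,c\in S$ has a solution in $S$, and uniqueness of the solution in $X$ forces the ambient solution to lie in $S$; hence subtribrackets are closed under the inverse operations as well, and your two inclusions go through. (Note that your appeal to the inverse move does not repair this on its own, since each inclusion separately needs the inverse-closure fact.) With that one observation added, your argument is a complete and correct proof of the statement the paper leaves to the reader.
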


\begin{example}
Our \texttt{python} computations reveal that though
the links L7a3 and L7a7 
\[\includegraphics{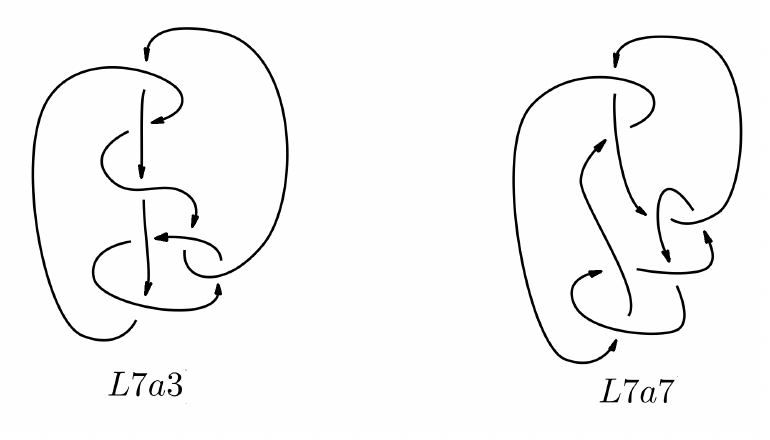}\]
both have $\Phi_x^{\mathbb{Z}}(L)=64$ colorings by the
tribracket structure on $X=\{1,2,3,4\}$ given by the 3-tensor
\[
\left[
\left[\begin{array}{rrrr}
1 & 2 & 3 & 4 \\
4 & 3 & 2 & 1 \\
3 & 4 & 1 & 2 \\
2 & 1 & 4 & 3 \\
\end{array}\right], 
\left[\begin{array}{rrrr}
4 & 3 & 2 & 1 \\
1 & 2 & 3 & 4 \\
2 & 1 & 4 & 3 \\
3 & 4 & 1 & 2 \\
\end{array}\right], 
\left[\begin{array}{rrrr}
3 & 4 & 1 & 2 \\
2 & 1 & 4 & 3 \\
1 & 2 & 3 & 4 \\
4 & 3 & 2 & 1 \\
\end{array}\right], 
\left[\begin{array}{rrrr}
2 & 1 & 4 & 3 \\
3 & 4 & 1 & 2 \\
4 & 3 & 2 & 1 \\
1 & 2 & 3 & 4
\end{array}\right]\right],
\]
they are distinguished by their subtribracket polynomials
with 
\[\Phi_{\mathrm{Im}\subset X}(L7a3)=\{4\times uvwx^2y^2z^4,\ 
48\times 4uvwx^2y^2z^4,\ 12\times 2uvwx^2y^2z^4\}\]
and
\[\Phi_{\mathrm{Im}\subset X}(L7a7)=\{4\times uvwx^2y^2z^4,\
28\times 2uvwx^2y^2z^4,\ 32\times 4uvwx^2y^2z^4\}.\]
In particular, this example shows that $\Phi_{\mathrm{Im}\subset X}$ is not
determined by $\Phi_X^{\mathbb{Z}}$ and hence is a proper enhancement.\\
\end{example}

\begin{example}
The subtribracket polynomial can distinguish single-component knots
with the same counting invariant as well. Our \texttt{python} computations
show that the granny knot $3_1\#3_1$ and the $6_1$ knot both have 2916
colorings by the Alexander tribracket $\mathbb{Z}_{18}$ with $t=5$ and $s=13$.
However, they are distinguished by their subtribracket polynomials, as \\
\begin{eqnarray*}
\Phi_{\mathrm{Im}\subset X}(3_1\#3_1) & = & 
\{324\times 3uvwy^2z^6,\ 972\times 6uvwx^{18}y^2z^6 + 12uvwy^2z^6,\ 972\times 3uvwx^{18}y^2z^6 + 6uvwy^2z^6,\\ & & \ 324\times 6uvwy^2z^6,\ 6\times uvwx^{18}y^2z^6,\ 156\times 3uvwx^{18}y^2z^6,\ 156\times 6uvwx^{18}y^2z^6,\\ & & \ 6\times 2uvwx^{18}y^2z^6\}
\end{eqnarray*}
while
\begin{eqnarray*}
\Phi_{\mathrm{Im}\subset X}(6_1) & = & \{108\times 3uvwy^2z^6,\ 1296\times 3uvwx^{18}y^2z^6 + 6uvwy^2z^6,\\ & & \ 1296\times 6uvwx^{18}y^2z^6 + 12uvwy^2z^6,\ 108\times 6uvwy^2z^6,\ 6\times uvwx^{18}y^2z^6,\\ & & \ 48\times 3uvwx^{18}y^2z^6,\ 48\times 6uvwx^{18}y^2z^6,\ 6\times 2uvwx^{18}y^2z^6\}. \\
\end{eqnarray*}
\end{example}

\begin{example}
Links that share an Alexander polynomial can also be distinguished by their subtribracket polynomials. Consider the pair of links L11n404 and L11n406. 

\[\scalebox{0.9}{\includegraphics{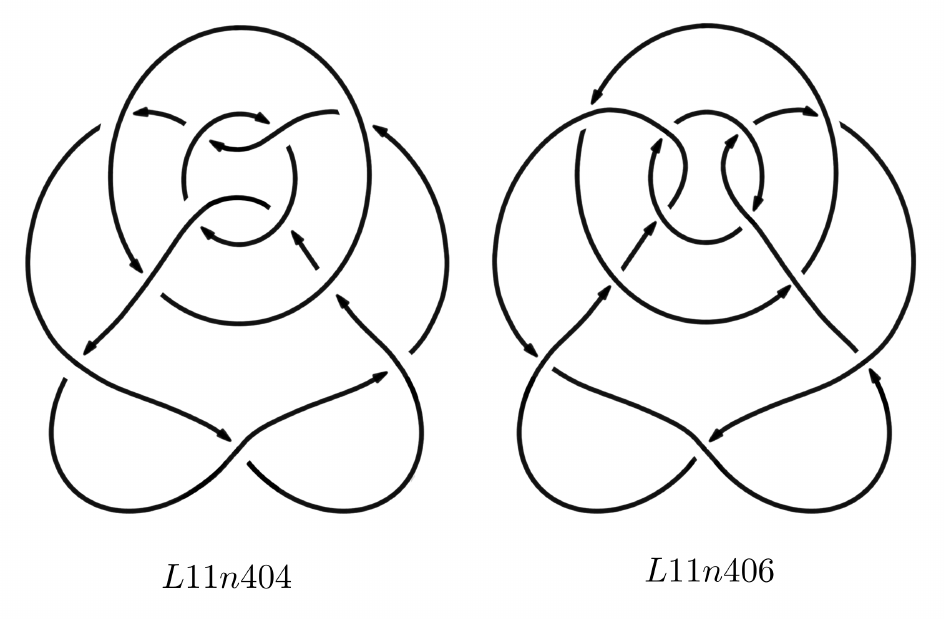}}\]

Although they both have a trivial Alexander polynomial, our subtribracket polynomial enhancement is able to differentiate them. Under the tribracket structure on $X = {1,2,3,4,5}$ specified by the tensor

\[
\left[
\left[\begin{array}{rrrrr} 1 & 3 & 2 & 5 & 4 \\ 5 & 4 & 3 & 2 & 1 \\ 4 & 2 & 5 & 1 & 3 \\ 2 & 1 & 4 & 3 & 5 \\ 3 & 5 & 1 & 4 & 2 \end{array}\right],
\left[\begin{array}{rrrrr} 5 & 4 & 3 & 2 & 1 \\ 1 & 3 & 2 & 5 & 4 \\ 1 & 3 & 2 & 5 & 4 \\ 3 & 5 & 1 & 4 & 2 \\ 4 & 2 & 5 & 1 & 3 \end{array}\right],
\left[\begin{array}{rrrrr} 4 & 2 & 5 & 1 & 3 \\ 1 & 3 & 2 & 5 & 4 \\ 3 & 5 & 1 & 4 & 2 \\ 5 & 4 & 3 & 2 & 1 \\ 2 & 1 & 4 & 3 & 5 \end{array}\right],
\left[\begin{array}{rrrrr} 2 & 1 & 4 & 3 & 5 \\ 3 & 5 & 1 & 4 & 2 \\ 5 & 4 & 3 & 2 & 1 \\ 4 & 2 & 5 & 1 & 3 \\ 1 & 3 & 2 & 5 & 4 \end{array}\right],
\left[\begin{array}{rrrrr} 3 & 5 & 1 & 4 & 2 \\ 4 & 2 & 5 & 1 & 3 \\ 2 & 1 & 4 & 3 & 5 \\ 1 & 3 & 2 & 5 & 4 \\ 5 & 4 & 3 & 2 & 1 \end{array}\right]
\right]
\]

our \texttt{python} computations reveal that they have different subtribracket polynomials:

\[\Phi_{\mathrm{Im}\subset X}(\text{L11n404})=\{1\times uvw^5xyz , 624 \times uvw^5xyz + 4 \times uvxyz\},\]

whereas

\[\Phi_{\mathrm{Im}\subset X}(\text{L11n406})=\{1\times uvw^5xyz , 124 \times uvw^5xyz + 4 \times uvxyz\}.\]

This same tribracket structure also distinguishes two orientations of the same link, denoted L10n9\{0\} and L10n9\{1\}. 

\[\scalebox{0.9}{\includegraphics{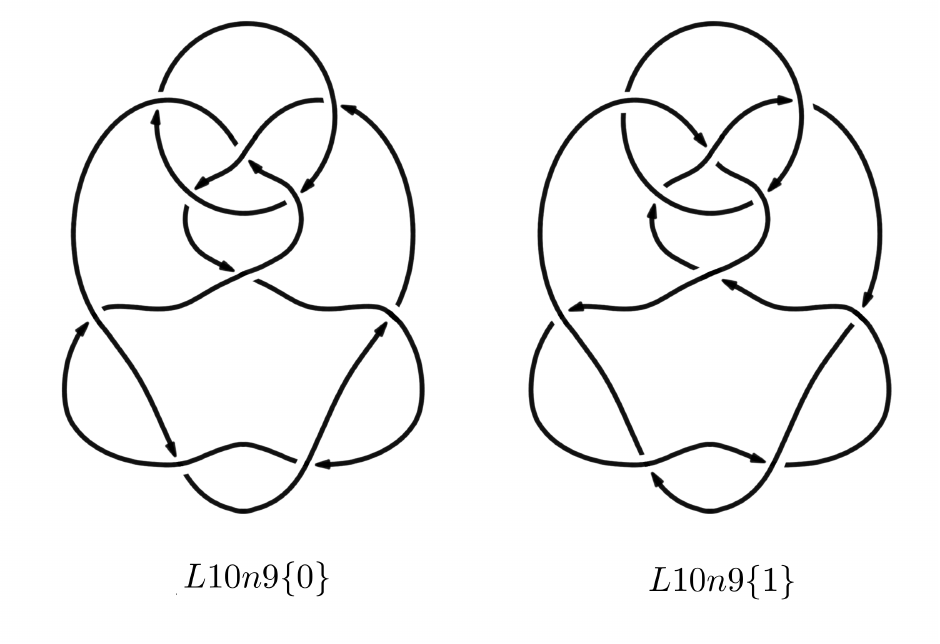}}\]

Both orientations result in the same multivariable Alexander polynomial, $1-t_1-t_2+t_1t_2$. \texttt{Python} code computed the link L10n9\{0\}'s subtribracket polynomials as

\[\Phi_{\mathrm{Im}\subset X}(\text{L10n9\{0\}})=\{1\times uvw^5xyz , 124 \times uvw^5xyz + 4 \times uvxyz\},\]

while the associated polynomials for L10n9\{1\} are

\[\Phi_{\mathrm{Im}\subset X}(\text{L10n9\{1\}})=\{1\times uvw^5xyz , 24 \times uvw^5xyz + 4 \times uvxyz\}.\]

In particular, these examples show that the subtribracket polynomial 
invariant (and indeed the tribracket counting invariant) are not determined 
by the multivariable Alexander polynomial and are sensitive to orientation.
\end{example}

\begin{example}
Using our \texttt{python} code, we computed the subtribracket polynomial
invariant for a choice of orientation for each of the prime links with up 
to seven crossings in the Thistlethwaite link table at the Knot Atlas 
\cite{KA}. These invariant values were calculated with respect to the Alexander tribracket structure on 
$X=\mathbb{Z}_8$ with parameters $t=3$ and $s=5$. The results are collected 
in the below table.
\[
\begin{array}{r|l}
L & \Phi_{\mathrm{Im}\subset X}(L) \\ \hline
L2a1 & \{8\times uvwx^8y^2z^4,\ 24\times 2uvwx^8y^2z^4,\ 32\times 4uvwx^8y^2z^4,\ 64\times 8uvwx^8y^2z^4 \}\\
L4a1 & \{8\times uvwx^8y^2z^4,\ 24\times 2uvwx^8y^2z^4,\ 96\times 4uvwx^8y^2z^4,\ 128\times 8uvwx^8y^2z^4 \}\\
L5a1 & \{8\times uvwx^8y^2z^4,\ 24\times 2uvwx^8y^2z^4,\ 96\times 4uvwx^8y^2z^4,\ 384\times 8uvwx^8y^2z^4 \}\\
L6a1 & \{8\times uvwx^8y^2z^4,\ 24\times 2uvwx^8y^2z^4,\ 96\times 4uvwx^8y^2z^4,\ 128\times 8uvwx^8y^2z^4 \}\\
L6a2 & \{8\times uvwx^8y^2z^4,\ 24\times 2uvwx^8y^2z^4,\ 32\times 4uvwx^8y^2z^4,\ 64\times 8uvwx^8y^2z^4 \}\\
L6a3 & \{8\times uvwx^8y^2z^4,\ 24\times 2uvwx^8y^2z^4,\ 32\times 4uvwx^8y^2z^4,\ 64\times 8uvwx^8y^2z^4\}\\
L6a4 & \{8\times uvwx^8y^2z^4,\ 56\times 2uvwx^8y^2z^4,\ 448\times 4uvwx^8y^2z^4,\ 512\times 8uvwx^8y^2z^4 \}\\
L6a5 & \{8\times uvwx^8y^2z^4,\ 56\times 2uvwx^8y^2z^4,\ 64\times 4uvwx^8y^2z^4,\ 128\times 8uvwx^8y^2z^4 \}\\
L6n1 & \{8\times uvwx^8y^2z^4,\ 56\times 2uvwx^8y^2z^4,\ 64\times 4uvwx^8y^2z^4,\ 128\times 8uvwx^8y^2z^4 \}\\
L7a1 & \{8\times uvwx^8y^2z^4,\ 24\times 2uvwx^8y^2z^4,\ 96\times 4uvwx^8y^2z^4,\ 384\times 8uvwx^8y^2z^4 \}\\
L7a2 & \{8\times uvwx^8y^2z^4,\ 24\times 2uvwx^8y^2z^4,\ 96\times 4uvwx^8y^2z^4,\ 128\times 8uvwx^8y^2z^4 \}\\
L7a3 & \{8\times uvwx^8y^2z^4,\ 24\times 2uvwx^8y^2z^4,\ 96\times 4uvwx^8y^2z^4,\ 384\times 8uvwx^8y^2z^4 \}\\
L7a4 & \{8\times uvwx^8y^2z^4,\ 24\times 2uvwx^8y^2z^4,\ 96\times 4uvwx^8y^2z^4,\ 384\times 8uvwx^8y^2z^4 \}\\
L7a5 & \{8\times uvwx^8y^2z^4,\ 24\times 2uvwx^8y^2z^4,\ 32\times 4uvwx^8y^2z^4,\ 64\times 8uvwx^8y^2z^4\}\\ 
L7a6 & \{8\times uvwx^8y^2z^4,\ 24\times 2uvwx^8y^2z^4,\ 32\times 4uvwx^8y^2z^4,\ 64\times 8uvwx^8y^2z^4 \}\\ 
L7a7 & \{8\times uvwx^8y^2z^4,\ 56\times 2uvwx^8y^2z^4,\ 64\times 4uvwx^8y^2z^4,\ 128\times 8uvwx^8y^2z^4 \}\\
L7n1 & \{8\times uvwx^8y^2z^4,\ 24\times 2uvwx^8y^2z^4,\ 96\times 4uvwx^8y^2z^4,\ 128\times 8uvwx^8y^2z^4 \}\\
L7n2 & \{8\times uvwx^8y^2z^4,\ 24\times 2uvwx^8y^2z^4,\ 96\times 4uvwx^8y^2z^4,\ 384\times 8uvwx^8y^2z^4\}
\end{array}
\]
\end{example}

\section{Questions}\label{Q}

We conclude with some open questions for future research.

\begin{itemize}
\item To what extent is the algebraic structure of a tribracket determined by 
its tribracket polynomial?
\item What conditions on a polynomial make it a tribracket polynomial? That is,
what conditions on a six-variable polynomial $p$ are individually necessary and 
jointly sufficient for the existence of a tribracket $X$ such that $p=\phi(x)$?
\item Does the geometry of the algebraic variety in $\mathbb{C}^6$ determined
by a tribracket polynomial or subtribracket polynomial carry any special 
significance?
\item Compared with the case of quandles with their 2-dimensional operation 
tables, the 3-dimensional operation tensors of tribrackets present a greater 
variety of options for defining tribracket polynomials. We have made one such choice here, but others are possible and might be of interest to explore.
\end{itemize}

\bibliography{sn-fn}{}
\bibliographystyle{abbrv}

\bigskip

\noindent
\textsc{Department of Mathematical Sciences \\
Claremont McKenna College \\
850 Columbia Ave. \\
Claremont, CA 91711}

\bigskip

\noindent
\textsc{Department of Mathematics \\
Harvey Mudd College\\
301 Platt Boulevard \\
Claremont, CA 91711
}

\end{document}